\documentclass[10pt]{article}

\usepackage{amsfonts,amsmath,amssymb,mathrsfs,dsfont,amsthm}
\usepackage{graphicx,ifthen}
\usepackage{color}
\usepackage{array}
\usepackage{natbib}
\usepackage{enumerate}
\usepackage{stmaryrd}
\usepackage{bbm,bm}
\usepackage{comment}
\usepackage{caption}
\usepackage{booktabs}

%
%
\usepackage{xspace}
\RequirePackage[colorlinks,citecolor=blue,urlcolor=blue]{hyperref}
\usepackage{booktabs}

\newtheorem{theorem}{Theorem}
\theoremstyle{definition}

\newcommand{\E}{\mathrm{E}}
\newcommand{\R}{\mathbb{R}}
\newcommand{\N}{\mathbb{N}}

\newcommand{\law}{\mathcal{L}\xspace}

\newcommand{\Var}{\mathrm{Var}}

\newcommand{\p}{p}

\newcommand{\ddr}{\mathrm{d}}
\newcommand{\edr}{\mathrm{e}}
\newcommand{\fine}{\hfill $\Box$}

\newcommand{\comillas}[1]{``\,#1\,''}
\definecolor{iblue}{rgb}{0.1,0,0.75}
\definecolor{ired}{rgb}{0.9,0,0.1}

\newcommand{\Prob}{\mathrm{P}}

\def\nsample{n}
\def\latent{X_{1:\nsample}\xspace}
\def\data{Y_{1:\nsample}\xspace}
\def\latentunique{X_{1:k}^*\xspace}
\def\frequencies{\nsample_{1:k}^*\xspace}

\usepackage[acronym]{glossaries}

\makenoidxglossaries 

\newacronym{iid}{i.i.d.}{independent and identically distributed}
\newacronym{CLT}{CLT}{central limit theorem}


\pagestyle{plain}

\addtolength{\hoffset}{-20pt} \addtolength{\textwidth}{70pt}
\addtolength{\textheight}{20pt}
\parskip=5pt
\setcounter{page}{1}
\pagestyle{plain}


\graphicspath{ {graphics/} }


\allowdisplaybreaks


\begin{document}

\begin{center}
{\large{\sc Stochastic approximations to the Pitman--Yor process}}
\bigskip

Julyan Arbel, Pierpaolo De Blasi, Igor Pr\"{u}nster 
\bigskip

\end{center}


\begin{abstract}
In this paper we consider approximations to the popular Pitman--Yor process obtained by truncating the stick-breaking representation. The truncation is determined by a random stopping rule 
that achieves an almost sure control on the approximation error in total variation distance. We derive the asymptotic distribution of the random truncation point as the approximation error $\epsilon$ goes to zero in terms of a polynomially tilted positive stable random variable. The practical usefulness and effectiveness of this theoretical result is demonstrated by devising a sampling algorithm to approximate functionals of the $\epsilon$-version of the Pitman--Yor process.
\end{abstract}

\textbf{Keywords}:
stochastic approximation; 
asymptotic distribution;
Bayesian Nonparametrics;
Pitman--Yor process;
random functionals;
random probability measure;
stopping rule.
\\[-2mm]

\section{Introduction\label{sec:intro}}

The Pitman--Yor process defines a rich and flexible class of random probability measures used as prior distribution in Bayesian nonparametric inference. It originates from the work of \citet{Per:Pit:Yor:92}, further investigated in \citet{Pit:95,Pit:Yor:97}, and 
its use in nonparametric inference was initiated by \citet{Ish:Jam:01}. Thanks to its analytical tractability and flexibility, it has found applications in a variety of inferential problems which include species sampling \citep{Lij:etal:07,Fav:etal:09,Nav:etal:08}, survival analysis and gene networks \citep{Jar:etal:10, yang}, linguistics and image segmentation \citep{Teh:06,Sud:Jor:09}, curve estimation \citep{Can:etal:17} and time-series and econometrics \citep{caron,leisen}. 
The Pitman--Yor process is a discrete probability measure 
\begin{equation}\label{eq:discrete}
  P(\ddr x)=\sum_{i\geq1}p_{i}\delta_{\xi_{i}}(\ddr x)
\end{equation}
where $(\xi_{i})_{i\geq1}$ are \gls{iid} random variables with common distribution $P_0$ on a 
Polish space ${\cal X}$, and $(p_{i})_{i\geq1}$ are random frequencies, i.e. $p_i\geq 0$ and $\sum_{i\geq1}p_{i}=1$, independent of $(\xi_{i})_{i\geq1}$. The distribution of the frequencies of the Pitman--Yor process is known in the literature as the two-parameter Poisson--Dirichlet distribution.
Its distinctive property is that the frequencies in {\it size-biased order}, that is the random arrangement in the order of appearance in a simple random sampling without replacement, admit the \textit{stick-breaking representation}, or residual allocation model,
\begin{equation}\label{eq:stick-breaking}
  \p_i\overset{d}{=}V_i\prod_{j=1}^{i-1}(1-V_j),
  \quad V_j\overset{\mbox{\footnotesize ind}}{\sim}
  \mbox{beta}(1-\alpha,\theta+j\alpha) 
\end{equation}
for $0\leq \alpha<1$ and $\theta>-\alpha$, see \citet{Pit:Yor:97}. By setting $\alpha=0$ one recovers the Dirichlet process of \citet{Fer:73}.
Representation \eqref{eq:stick-breaking} turns out very useful in devising finite support approximation to the Pitman--Yor process obtained by  truncating the summation in~\eqref{eq:discrete}. A general method consists in setting the truncation level $n$ by replacing $p_{n+1}$ with $1-(p_1+\cdots+p_n)$ in~\eqref{eq:discrete}. The key quantity is the \textit{truncation error} of the infinite summation~\eqref{eq:discrete},
\begin{equation}\label{eq:R_n}
  R_n= \sum_{i>n} \p_i  
  = \prod_{j\leq n} (1-V_j),
\end{equation}
since the resulting truncated process, say $P_n(\cdot)$, will be close to $P(\cdot)$ according to
  $|P(A)-P_n(A)|\leq R_n$
for any measurable $A\subset{\cal X}$. It is then important to study the distribution of the truncation error $R_n$ as $n$ gets large in order to control the approximation error. \citet{Ish:Jam:01} proposes to determine the truncation level based on the moments of $R_n$. Cf. also \citet{Ish:Zar:02,Gel:Kot:02}. In this paper we propose and investigate a random truncation by setting $n$ such that $R_n$ is smaller than a predetermined value $\epsilon\in(0,1)$ with probability one. Specifically, we define 
\begin{equation}\label{eq:tau_eps}
  \tau(\epsilon)=\min\{n\geq 1:\
  R_n<\epsilon\}
\end{equation}
as the stopping time of the multiplicative process $(R_n)_{n\geq 1}$ and, 
following  \citet[Section 4.3.3]{Gho:vdV:17}, we call {\it $\epsilon$-Pitman--Yor ($\epsilon$-PY) process} the Pitman--Yor process truncated at $n=\tau(\epsilon)$, namely
\begin{equation}\label{eq:eps-PY}
  P_\epsilon(\ddr x)
  =\sum_{i=1}^{\tau(\epsilon)}\p_i
  \delta_{\xi_i}(\ddr x)+ R_{\tau(\epsilon)}
  \delta_{\xi_0}(\ddr x),
\end{equation}  
where $\xi_0$ has distribution $P_0$, independent of the sequences $(\p_i)_{i\geq 1}$ and $(\xi_i)_{i\geq 1}$. 
By construction, $P_\epsilon$ is the finite stick-breaking approximation to $P$ with the smallest number of support points given a predetermined approximation level. In fact $\tau(\epsilon)$ controls the error of approximation according to the total variation bound
\begin{equation}\label{eq:TV}
  d_{TV}(P_\epsilon,P)
  =\sup_{A\subset{\cal X}}|P(A)-P_\epsilon(A)|
  \leq \epsilon
\end{equation}
almost surely (a.s.). As such, it also guarantees the almost sure convergence of measurable functionals of $P$ by the corresponding functionals of $P_\epsilon$ as $\epsilon\to 0$, cf. \citet[Proposition 4.20]{Gho:vdV:17}. A typical application is in Bayesian nonparametric inference on mixture models where the Pitman--Yor process is used as prior distribution on the mixing measure. The approximation $P_\epsilon$ can be applied to the posterior distribution given the latent variables, cf. Section \ref{sec:2.2} for details. 
In the Dirichlet process case, $P_\epsilon$ has been studied by \citet{Mul:Tar:98}. In this setting $\tau(\epsilon)-1$ is Poisson distributed with parameter $\theta\log 1/\epsilon$, which makes an exact sampling of the $\epsilon$-approximation \eqref{eq:eps-PY} feasible. This has been implemented in the highly popular R software DPpackage, see \citep{Jar:07,Jar:etal:11}, to draw posterior inference on the random effect distribution of linear and generalized linear mixed effect model.
Finally, in \citet{All:Zar:14} a  different type of finite dimensional truncation of the Pitman--Yor process based on decreasing frequencies has been proposed, see Section \ref{sec:5} for a discussion.

The main theoretical contribution of this paper is the derivation of the asymptotic distribution of $\tau(\epsilon)$ as $\epsilon\to 0$ for $\alpha>0$. 
As \eqref{eq:tau_eps} suggests, the asymptotic distribution of $\tau(\epsilon)$ is related to that of $R_n$ in \eqref{eq:R_n} as $n\to\infty$. According to \citet[Lemma 3.11]{Pit:06}, the latter involves a polynomially tilted stable random variable $T_{\alpha,\theta}$, see Section \ref{sec:2} for a formal definition. The main idea is to work with $T_n=-\log R_n$ so to deal with sums of the independent random variables $Y_i=-\log(1-V_i)$. The distribution of  $\tau(\epsilon)$ can be then studied in terms of the allied {\it renewal counting process}
  $N(t)=\max\{n:\ T_n\leq t \}$,
according to the relation
  $\tau(\epsilon)=N(\log 1/\epsilon)+1$.
The problem boils down to the derivation of an appropriate a.s. convergence of $N(t)$ as $t\to\infty$, which, in turn, is obtained from the asymptotic distribution of $T_n$ by showing that $N(t)\to\infty$ a.s. as $t\to\infty$ together with a (non standard) application of the law of large numbers for randomly indexed sequences. This strategy proves successful in establishing the almost sure convergence of $\tau(\epsilon)-1$ to 
  $(\epsilon T_{\alpha,\theta}/\alpha)^{-\alpha/(1-\alpha)}$
as $\epsilon\to 0$.
The form of the asymptotic distribution reveals how large the truncation point $\tau(\epsilon)$ is as $\epsilon$ gets small in terms of the model parameters $\alpha$ and $\theta$.
In particular, it highlights the power law behavior of $\tau(\epsilon)$ as $\epsilon\to 0$, namely the growth at the polynomial rate $1/\epsilon^{\alpha/(1-\alpha)}$ compared to the slower logarithmic rate $\theta\log 1/\epsilon$ in the Dirichlet process case. This is further illustrated by a simulation study in which we generate from the asymptotic distribution of $\tau(\epsilon)$ by using Zolotarev's integral representation of the positive stable distribution as in \citet{Dev:09}. 
As far as the simulation of the $\epsilon$-PY process is concerned, exact sampling is feasible by implementing the stopping rule in \eqref{eq:tau_eps}, that is by simulating the stick breaking frequencies $p_j$ until the error $R_n$ crosses the approximation level $\epsilon$. As this can be computationally expensive when $\epsilon$ is small, as an alternative we propose to use the asymptotic distribution of $\tau(\epsilon)$ by simulating the truncation point first, then run the stick breaking procedure up to that point. It results in an approximate sampler of the $\epsilon$-PY process that we compare with the exact sampler in a simulation study involving moments and mean functionals. 

The rest of the paper is organized as follows. In Section~\ref{sec:2}, we derive the asymptotic distribution of $\tau(\epsilon)$ and explain how to use it to simulate from the $\epsilon$-PY process. Section~\ref{sec:3} reports a simulation study on the  distribution of $\tau(\epsilon)$ and on functionals of the $\epsilon$-PY process. In Section \ref{sec:4}, to help the understanding and gain additional insight on the asymptotic distribution, we highlight the connections of $\tau(\epsilon)$ with Pitman's theory on random partition structures.
We conclude with a discussion of open problems in Section \ref{sec:5}. 
The details of Devroye's algorithm for generating from a polynomially tilted positive stable random variable are given in Appendix \ref{appendix:devroye}.


\section{Theory and algorithms}\label{sec:2}


\subsection{Asymptotic distribution of $\tau(\epsilon)$}

In this section we derive the asymptotic distribution of the stopping time $\tau(\epsilon)$ and show how to simulate from it. 
We start by introducing the renewal process interpretation which is crucial for the asymptotic results. As explained in the previous section, in order to study the distribution of $\tau(\epsilon)$ it is convenient to work with the log transformation of the truncation error $R_n$ in \eqref{eq:R_n}, that is
\begin{equation}\label{eq:T_n}
  T_n=\sum_{i=1}^n Y_i,\quad
  Y_i=-\log(1-V_i),
\end{equation}  
with
  $V_j\overset{\mbox{\footnotesize ind}}{\sim}\mbox{beta}(1-\alpha,\theta+j\alpha)$
as in \eqref{eq:stick-breaking}. Being a sum of  independent and nonnegative random variables, $(T_n)_{n\geq 1}$ takes the interpretation of a (generalized) renewal process with independent waiting times $Y_i$. For $t\geq 0$ define 
\begin{equation}\label{eq:N(t)}
  N(t)=\max\{n:\ T_n\leq t \},
\end{equation}  
to be the {\it renewal counting process} associated to $(T_n)_{n\geq 1}$, which is related to $\tau(\epsilon)$ via
  $\tau(\epsilon)=N(\log 1/\epsilon)+1$.
Classical renewal theory pertains to iid waiting times while here there is no identity in distribution unless $\alpha=0$, i.e. the Dirichlet process case.  
In the latter setting, one gets $Y_i\overset{\mbox{\footnotesize \gls{iid}}}{\sim}\mbox{Exp}(\theta)$ so that $T_n$ has gamma distribution with scale parameter $n$. We immediately get from the relation 
  $\{T_n\leq t\}=\{N(t)\geq n\}$
that $N(t)\sim \mbox{Pois}(\theta t)$ and, in turn, that $\tau(\epsilon)-1$ has $\mbox{Pois}(\theta\log(1/\epsilon))$ distribution. As far as asymptotics is concerned, $T_n$ satisfies the \gls{CLT} with
  $(T_n-n/\theta)/(\sqrt{n}/\theta)
  \to_d Z$
where $Z\sim\mbox{N}(0,1)$. The asymptotic distribution of $N(t)$ can be obtained via Ascombe theorem, cf. \citet[Theorem 7.4.1]{Gut:13}, to get
  $(N(t)-\theta t)/(\sqrt{\theta t})
  \rightarrow_d Z$,
as $t\to\infty$, in accordance with the standard normal approximation of the Poisson distribution with large rate parameter.

In the general Pitman--Yor case $\alpha>0$, the waiting times $Y_i$ are no more identically distributed.
More importantly, generalizations of the CLT such as the Lindeberg--Feller theorem 
do not apply for $T_n$, 
hence we cannot resort to Anscombe's theorem to derive the asymptotic distribution of $N(t)$ and, in turn, of $\tau(\epsilon)$. 
Nevertheless, the limit exists but is not normal as stated in Theorem \ref{th:1} below. 
To this aim, let $T_\alpha$ be a positive 
stable random variable with exponent $\alpha$, that is $\E (\edr^{-s T_\alpha})=\edr^{-s^\alpha}$, and denote its density by $f_\alpha(t)$. A polynomially tilted version of $T_\alpha$ is defined as the random variable $T_{\alpha,\theta}$ with density proportional to $t^{-\theta} f_\alpha(t)$, that is
\begin{equation}\label{eq:density}
  f_{\alpha,\theta}(t) 
  = \frac{\Gamma(\theta+1)}{\Gamma(\theta/\alpha+1)}
  t^{-\theta}f_\alpha(t),\quad t> 0.
\end{equation}
The random variable $T_{\alpha,\theta}$ is of paramount importance in the theory of random partition structures associated to the frequency distribution of the Pitman--Yor process, see Section \ref{sec:4} for details. In particular, the convergence of $R_n$ can be expressed in terms of $T_{\alpha,\theta}$. In Theorem \ref{th:1}
the a.s. limit of $\log N(t)$ as $t\to\infty$ is obtained from that of $T_n=-\log R_n$ as $n\to\infty$ by showing that $N(t)\to\infty$ a.s. as $t\to\infty$ and by an application of the law of large numbers for randomly indexed sequences.
\begin{theorem}\label{th:1}
Let $N(t)$ be defined in \eqref{eq:T_n}--\eqref{eq:N(t)} and $T_{\alpha,\theta}$ be the random variable with density in \eqref{eq:density}. Then
  $t-(1/\alpha-1)\log N(t)+\log\alpha
  \to_{a.s.} \log T_{\alpha,\theta}$
as $t\to\infty$.
\end{theorem}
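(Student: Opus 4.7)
The plan is to transport the almost sure asymptotics of the renewal partial sums $T_n = -\log R_n$ to the renewal counting process $N(t)$ via the classical inversion $T_{N(t)} \leq t < T_{N(t)+1}$, using that $N(t) \to \infty$ almost surely. Set $Z_n := T_n - (1/\alpha-1)\log n$ and $Z := \log(T_{\alpha,\theta}/\alpha)$. The aim is to prove $Z_n \to Z$ a.s., then conclude $t - (1/\alpha-1)\log N(t) \to Z$ a.s., which is exactly the claim after adding $\log\alpha$.

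The first ingredient is the almost sure asymptotic behavior of $T_n$. Pitman's Lemma 3.11 describes $R_n$ as $n\to\infty$ in terms of the polynomially tilted positive stable random variable $T_{\alpha,\theta}$; unwinding it and taking logarithms yields an identity of the form $Z_n \to Z$ almost surely, with the additive constant $-\log\alpha$ coming from the precise form of Pitman's limit. I expect this step to be the main obstacle, since the waiting times $Y_i = -\log(1-V_i)$ with $V_i \sim \mathrm{beta}(1-\alpha,\theta+i\alpha)$ are independent but not identically distributed, and---as the authors explicitly note---Lindeberg--Feller type theorems fail for $T_n$, so no standard sum-of-independent-r.v.'s machinery applies. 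One must instead exploit the special structure of size-biased picks from $\mathrm{PD}(\alpha,\theta)$ and the connection with an $\alpha$-stable subordinator, which is precisely what Pitman's lemma provides.

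Granted Step~1, the remaining steps are routine. Since $\alpha\in(0,1)$ we have $(1/\alpha-1)>0$, hence $T_n\to\infty$ almost surely; consequently, for each fixed $n$ the event $\{T_n\leq t\}$ occurs for all sufficiently large $t$, so $N(t)\geq n$ eventually and $N(t)\to\infty$ a.s. The sandwich $T_{N(t)} \leq t < T_{N(t)+1}$ then rewrites as
\[
Z_{N(t)} \;\leq\; t - (1/\alpha-1)\log N(t) \;<\; Z_{N(t)+1} + (1/\alpha-1)\log\!\bigl(1+1/N(t)\bigr).
\]
With $N(t)\to\infty$ a.s. and $Z_n\to Z$ a.s., the randomly indexed sequences $Z_{N(t)}$ and $Z_{N(t)+1}$ converge a.s. to $Z$; this is the ``law of large numbers for randomly indexed sequences'' referenced in the excerpt, and in this almost sure category it reduces to the path-by-path observation that any a.s.-convergent sequence is stable under a.s.-divergent random time changes (the ``non-standard'' aspect flagged by the authors being that $N(t)$ is defined through the very sequence $T_n$ whose asymptotics are being used). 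The logarithmic correction $(1/\alpha-1)\log(1+1/N(t))$ vanishes a.s., so a squeeze gives $t-(1/\alpha-1)\log N(t)\to Z$ almost surely.

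In summary, the only genuinely hard work is promoting Pitman's description of $R_n$ to the sharp additive identity $T_n = (1/\alpha-1)\log n + \log(T_{\alpha,\theta}/\alpha) + o(1)$ almost surely, and correctly identifying the constant $\log\alpha$; everything else is a clean renewal-theoretic wrapper that works despite the non-i.i.d. increments precisely because the convergence it consumes is already almost sure.
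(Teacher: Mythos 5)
Your proposal is correct, and its skeleton coincides with the paper's: invoke Pitman's Lemma 3.11 (via $R_n\sim_{a.s.}\alpha T_{\alpha,\theta}^{-1}n^{1-1/\alpha}$, so $Z_n=T_n-(1/\alpha-1)\log n\to\log(T_{\alpha,\theta}/\alpha)$ a.s., with the constant identified correctly), show $N(t)\to\infty$ a.s., and transfer along the random index using the pathwise stability of a.s. convergence under a.s.-divergent time changes. The genuine difference lies in how you exploit the inversion sandwich $T_{N(t)}\leq t<T_{N(t)+1}$. The paper first proves $T_{N(t)}-(1/\alpha-1)\log N(t)+\log\alpha\to_{a.s.}\log T_{\alpha,\theta}$ and then must replace $T_{N(t)}$ by $t$, which forces it to show $t-T_{N(t)}\leq Y_{N(t)+1}\to_{a.s.}0$; this in turn requires a separate Borel--Cantelli argument that $Y_n\to_{a.s.}0$, carried out through a beta tail bound and Euler's gamma-ratio asymptotics. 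By rewriting both sides of the sandwich directly in terms of $Z_{N(t)}$ and $Z_{N(t)+1}+(1/\alpha-1)\log(1+1/N(t))$, you squeeze $t-(1/\alpha-1)\log N(t)$ between two quantities that already converge a.s. to the same limit, so the entire $Y_n\to0$ computation becomes unnecessary. Your route is therefore shorter and more elementary; what the paper's route buys in exchange is the slightly stronger side information that the overshoot $t-T_{N(t)}$ itself vanishes a.s., which your argument neither needs nor delivers. Your pathwise justification of $N(t)\to_{a.s.}\infty$ (each $T_n$ is a.s. finite, so $N(t)\geq n$ eventually) is also a mild simplification of the paper's detour through convergence in probability plus monotonicity, and is perfectly adequate.
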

\begin{proof}
By definition \eqref{eq:N(t)}, the renewal process $N(t)$ is related to the sequence of renewal epochs $T_n$ through 
\begin{equation}\label{eq:fundamental}
  \{T_n\leq t\}=\{N(t)\geq n\}.
\end{equation}
Since $N(T_n)=n$, we have $T_{N(t)}=T_n$ when $t=T_n$, thus 
  $0=t-T_{N(t)}$
for $t=T_n$.
Moreover, since $N(t)$ is increasing, when $T_n<t<T_{n+1}$, $N(T_n)<N(t)<N(t)+1$, hence
  $T_{N(t)}<t<T_{N(t)+1}$,
i.e.
  $0<t-T_{N(t)}<T_{N(t)+1}-T_{N(t)}=Y_{N(t)+1}$.
Together the two relations above yield
\begin{equation}\label{eq:sandwich}
  0\leq t-T_{N(t)}<Y_{N(t)+1}.
\end{equation}  
From Lemma 3.11 of \citet{Pit:06} and an application of the continuous mapping theorem \citep[see Theorem 10.1 in][]{Gut:13} the asymptotic distribution of $T_n$ is obtained as
  $$ T_n-(1/\alpha-1)\log n+\log\alpha
  \to_{a.s.} \log T_{\alpha,\theta}
  \quad\mbox{as }n\to\infty.$$
Now we would like to take the limit with respect to $n=N(t)$ as $t\to \infty$, that is apply the law of large numbers for randomly indexed sequence \citep[see Theorem 6.8.1 in][]{Gut:13}. To this aim, we first need to prove that 
  $N(t)\to_{a.s.}\infty$
as $t\to\infty$.
Since $N(t)$ is non decreasing, by an application of Theorem 5.3.5 in \citet{Gut:13}, it is sufficient to prove that $N(t)\to\infty$ in probability as $t\to\infty$, that is
  $\Prob(N(t)\geq n)\to 1$
as $t\to\infty$
for any $n\in\N$. But this is an immediate consequence of  the inversion formula \eqref{eq:fundamental}. We have then established that 
  $$T_{N(t)}-(1/\alpha-1)\log N(t)+\log\alpha
  \to_{a.s.} \log T_{\alpha,\theta}\quad
  \mbox{as }t\to\infty.$$
To conclude the proof, we need to replace $T_{N(t)}$ with $t$ in the limit above. Note that, from \eqref{eq:sandwich},
  $|t-T_{N(t)}|\leq Y_{N(t)+1}$
so it is sufficient to show that the upper bound goes to zero a.s.. Actually, by a second application of Theorem 6.8.1 in \citet{Gut:13} it is sufficient to show that 
  $Y_n\to_{a.s.} 0$
as $n\to\infty$.
This last result is established as follows. Recall that 
  $Y_j=-\log(1-V_j)$ for
  $V_j\overset{\mbox{\footnotesize ind}}{\sim}\mbox{beta}(1-\alpha,\theta+j\alpha)$.
For $\epsilon>0$,
\begin{align}
  \nonumber
  \Prob(1-V_n<\edr^{-\epsilon})
  &=\int_0^{\edr^{-\epsilon}}
  \frac{\Gamma(\theta+n\alpha+1-\alpha)}
  {\Gamma(\theta+n\alpha)\Gamma(1-\alpha)}
  v^{\theta+n\alpha-1}(1-v)^{-\alpha}\ddr x\\
  \nonumber
  &\leq \frac{(1-\edr^{-\epsilon})^{-\alpha}}
  {\Gamma(1-\alpha)}
  \frac{\Gamma(\theta+n\alpha+1-\alpha)}
  {\Gamma(\theta+n\alpha)}
  \frac{\edr^{-\epsilon(\theta+n\alpha)}}{\theta+n\alpha}\\
  \label{eq:BC}
  &=\frac{(1-\edr^{-\epsilon})^{-\alpha}}{\Gamma(1-\alpha)}
  (\theta+n\alpha)^{-\alpha}
  \edr^{-\epsilon(\theta+n\alpha)}
  \Big(1+O\Big(\frac{1}{\theta+n\alpha}\Big)\Big)
\end{align}
where 
in equality \eqref{eq:BC}
we have used Euler's formula
  $$\Gamma(z+\alpha)/\Gamma(z+\beta)=z^{\alpha-\beta}\left[
  1+\frac{(\alpha-\beta)(\alpha+\beta-1)}{2z}+O(z^{-2})\right]$$
for $z\to\infty$, see \citet{Tri:Erd:51}. 
Since 
  $\Prob(Y_n>\epsilon)=\Prob(1-V_n<\edr^{-\epsilon})$,
\eqref{eq:BC} implies that $\Prob(Y_n>\epsilon)$ is exponentially decreasing in $n$ and, in turn, that $\sum_{n\geq 1}\Prob(Y_n>\epsilon)<\infty$. An application of Borel--Cantelli Lemma yields $Y_n\to_{a.s.} 0$ and the proof is complete.
\end{proof} 

The asymptotic distribution of $\tau(\epsilon)$ is readily derived from Theorem \ref{th:1} via the formula
  $\tau(\epsilon)=N(\log 1/\epsilon)+1$
and an application of the continuous mapping theorem. The proof is omitted.
\begin{theorem}\label{cor:limiting_tau_eps}
Let $\tau(\epsilon)$ be defined in \eqref{eq:tau_eps} and $T_{\alpha,\theta}$ be the random variable with density in \eqref{eq:density}. Then
  $\tau(\epsilon)-1
  \sim_{a.s.}
  (\epsilon T_{\alpha,\theta}/\alpha)^{-\alpha/(1-\alpha)}$
as $\epsilon\to 0$.
\end{theorem}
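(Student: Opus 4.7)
}
The strategy is simply to translate Theorem \ref{th:1} through the deterministic identity $\tau(\epsilon)=N(\log 1/\epsilon)+1$ and then exponentiate. First I would substitute $t=\log(1/\epsilon)$ into the conclusion of Theorem \ref{th:1}. Since $t\to\infty$ as $\epsilon\to 0$, this yields
\begin{equation*}
  \log(1/\epsilon)-(1/\alpha-1)\log\bigl(\tau(\epsilon)-1\bigr)+\log\alpha
  \;\to_{a.s.}\;\log T_{\alpha,\theta},
\end{equation*}
where I used $N(\log 1/\epsilon)=\tau(\epsilon)-1$.

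Next I would solve for $\log(\tau(\epsilon)-1)$. Since $0\le\alpha<1$, the coefficient $1/\alpha-1=(1-\alpha)/\alpha$ is strictly positive, so dividing is legal and gives
\begin{equation*}
  \log\bigl(\tau(\epsilon)-1\bigr)
  -\frac{\alpha}{1-\alpha}\,\log\!\Bigl(\frac{\alpha}{\epsilon\,T_{\alpha,\theta}}\Bigr)
  \;\to_{a.s.}\;0
  \qquad\text{as }\epsilon\to 0.
\end{equation*}
Equivalently, $\log(\tau(\epsilon)-1)-\log\bigl(\epsilon T_{\alpha,\theta}/\alpha\bigr)^{-\alpha/(1-\alpha)}\to_{a.s.}0$, which by continuity of the exponential map (i.e.\ the continuous mapping theorem applied to $x\mapsto\edr^x$) is exactly the claim $\tau(\epsilon)-1\sim_{a.s.}(\epsilon T_{\alpha,\theta}/\alpha)^{-\alpha/(1-\alpha)}$ in the sense that the ratio converges almost surely to $1$.

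There is essentially no obstacle here beyond bookkeeping: Theorem \ref{th:1} already carried the analytic content (the passage from $T_n$ to $N(t)$ via the law of large numbers for randomly indexed sequences, and the control of the overshoot $t-T_{N(t)}$), so the present statement is just a deterministic rewriting. The only minor point to double-check is that $T_{\alpha,\theta}>0$ a.s., so that the logarithm and the inversion are well defined on a full-probability event; this is immediate from the density \eqref{eq:density}, which is supported on $(0,\infty)$. For this reason the authors presumably omit the proof, and I would do the same after one or two lines of algebra as sketched above.
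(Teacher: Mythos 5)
Your argument is exactly the route the paper takes: it states that the result follows from Theorem \ref{th:1} via $\tau(\epsilon)=N(\log 1/\epsilon)+1$ and the continuous mapping theorem, and omits the details you spell out. The only nitpick is that you should write $0<\alpha<1$ rather than $0\leq\alpha<1$ when dividing by $1/\alpha-1$, since the theorem (and Theorem \ref{th:1}) concerns the case $\alpha>0$; otherwise the bookkeeping is correct.
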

In order to sample from the asymptotic distribution of $\tau(\epsilon)$, the key ingredient is random generation from the polynomially tilted stable random variable $T_{\alpha,\theta}$. 
Following \citet{Dev:09}, we resort to Zolotarev's integral representation, so let $A(u)$ be the Zolotarev function
  $$A(x)=\bigg(
  \frac{\sin(\alpha x)^\alpha\sin((1-\alpha)x)^{1-\alpha}}{\sin(x)}
  \bigg)^{\frac{1}{1-\alpha}},\quad x\in[0,\pi]$$
and $Z_{\alpha,b}$, $\alpha\in(0,1)$ and $b>-1$ be a Zolotarev random variable with density given by
  $$f(x)=\frac{\Gamma(1+b\alpha)\Gamma(1+b(1-\alpha))}
  {\pi\Gamma(1+b)A(x)^{b(1-\alpha)}},\quad 
  x\in[0,\pi].$$
According to Theorem 1 of \citet{Dev:09}, for $G_a$ a gamma distributed  random variable with shape $a>0$ and unit rate,
  $$T_{\alpha,\theta}\overset{d}{=}
  \bigg( \frac{A(Z_{\alpha,\theta/\alpha})}
  {G_{1+\theta(1-\alpha)/\alpha}}
  \bigg)^{\frac{1-\alpha}{\alpha}}$$
so that random variate generation simply requires one gamma random variable and one Zolotarev random variable. For the latter, rejection sampler can be used as detailed in \citet{Dev:09}. See {\sc Algorithm 3} in Appendix \ref{appendix:devroye}. 


\subsection{Simulation of the $\epsilon$-PY process}
\label{sec:2.2}

Given $\alpha,\theta,\epsilon$ and a probability measure $P_0$ on ${\cal X}$, an $\epsilon$-PY process can be generated by implementing the stopping rule in the definition of $\tau(\epsilon)$, cf. \eqref{eq:tau_eps}. The algorithm consists in a while loop as follows:

\begin{quote}
\centerline{{\sc Algorithm 1} (Exact sampler of $\epsilon$-PY)}
\vspace{4pt}

{\tt
1. set $i= 1$, $R= 1$
\vspace{4pt}

2. while $R\geq\epsilon$:$\quad$ generate $V$ 
  from $\text{\rm beta}(1-\alpha,\theta+i\alpha)$.
\\
\phantom{3. while $R>\epsilon$:$\quad$} set 
  $p_i= VR$, $R= R(1-V)$, $i= i+1$
\vspace{4pt}

3. set 
  $\tau= i$, $R_\tau=R$
\vspace{4pt}

4. generate $\tau+1$ random variates $\xi_{0},\xi_{1},\ldots,\xi_{\tau}$ from $P_0$
\vspace{4pt}

5. set
  $  P_\epsilon(\ddr x)
  =\sum_{i=1}^{\tau}\p_i
  \delta_{\xi_i}(\ddr x)+ R_\tau
  \delta_{\xi_0}(\ddr x)$
}
\end{quote}

When $\epsilon$ is small, the while loop happens to be computationally expensive since conditional evaluations at each iteration slow down computation, and memory allocation for the frequency and location vectors cannot be decided beforehand.
In order to avoid these pitfalls and make the algorithm faster, one should generate the stopping time $\tau(\epsilon)$ first, and the frequencies up to that point later. We propose to exploit the asymptotic distribution of $\tau(\epsilon)$ in Theorem~\ref{cor:limiting_tau_eps} 
as follows:

\begin{quote}
\centerline{{\sc Algorithm 2} (Approximate sampler of $\epsilon$-PY)}
\vspace{4pt}

{\tt
1: generate $T\overset{d}{=} T_{\alpha,\theta}$
\vspace{4pt}

2: set 
  $\tau\leftarrow 1+\lfloor 
  (\epsilon T/\alpha)^{-\alpha/(1-\alpha)}
  \rfloor$
\vspace{4pt}

3. for $i=1,\ldots,\tau$:$\quad$ generate $V_i$ 
  from $\text{\rm beta}(1-\alpha,\theta+i\alpha)$.
\\
\phantom{3. for $i=1,\ldots,\tau$:$\quad$} set 
  $p_i= V_i\prod_{j=1}^{i-1}(1-V_j)$
\vspace{4pt}

4. set 
  $R_\tau= 1-\sum_{i=1}^\tau p_i=\prod_{i=1}^\tau (1-V_j)$
\vspace{4pt}

5: generate $\tau+1$ random variates $\xi_{0},\xi_{1},\ldots,\xi_{\tau}$ from $P_0$
\vspace{4pt}

6: set
  $  P_\epsilon(\ddr x)
  =\sum_{i=1}^{\tau}\p_i
  \delta_{\xi_i}(\ddr x)+ R
  \delta_{\xi_0}(\ddr x)$
}
\end{quote}

{\sc Algorithm 2} is an {\it approximate} sampler of the $\epsilon$-PY process (while {\sc Algorithm 1} is an {\it exact} one) since it introduces two sources of approximations. First, through the use of the asymptotic distribution of $\tau(\epsilon)$. Second, through {\tt Step 3} since the $V_i$'s are not generated according to the conditional distribution given $\tau(\epsilon)$,
rather unconditionally. Finding the conditional distribution of $V_i$, or an asymptotic approximation thereof, is not an easy task and is object of current research. In terms of the renewal process interpretation in \eqref{eq:T_n}--\eqref{eq:N(t)}, the problem is to generate the waiting times $Y_i=-\log(1-V_i)$, $i=1,\ldots,n$, from the conditional distribution of the renewal epochs $(T_1,\ldots,T_n)$ given $N(t)=n$ for $t=-\log 1/\epsilon$.

A typical use of samples from the Pitman--Yor process we have in mind is in infinite mixture models. In fact, the discrete nature of the Pitman--Yor process makes it a suitable prior on the \textit{mixing distribution}. {\sc Algorithm} 1 or {\sc Algorithm} 2 can be then applied to approximate a functional of the posterior distribution of the mixing distribution. In such models, the process components can be seen as latent features exhibited by the data.
Let $P$ denote such a process, $\nsample$ denote the sample size and  $X_{1:\nsample}=(X_1,\ldots,X_\nsample)$ be an exchangeable sequence from $P$, that is $X_{1:\nsample} | P \overset{\mbox{\footnotesize \gls{iid}}}{\sim} P$. Variables $X_{1:\nsample}$ are latent variables in a model conditionally on which observed data $Y_{1:\nsample}$ come from: $Y_j | X_j \overset{\mbox{\footnotesize ind}}{\sim} f(\,\cdot\, | X_j)$ 
where $f$ denotes a kernel density.
Actually, independence is not necessary here and applications also encompass dependent models such as Markov chain transition density estimation. 
In order to deal with the infinite dimensionality of the process, a strategy is to marginalize it and to draw posterior inference with a \textit{marginal} sampler. 
Since draws from a marginal sampler allows to make inference only on posterior expectations of the process, for more general functionals of $P$, in the form of $\psi(P)$, one typically needs to resort to an additional sampling step. Exploiting the composition rule
  $\law(\psi(P)|\data) 
  = \law(\psi(P)|\latent)\times\law(\latent|\data)$
this additional step boils down to sampling $P$ conditional on latent variables $\latent$. At this stage, recalling the conditional conjugacy of the Pitman--Yor process is useful. Among $\latent$, there are a number $k\leq \nsample$ of unique values that we denote by $\latentunique$. Let $\frequencies$ denote their frequencies. Then the following identity in distribution holds
  $$P|\latent = \sum_{j=1}^{k}q_j \delta_{X_j^*} + q_{k+1}P^*,$$
where, independently, $(q_1,\ldots,q_k,q_{k+1})\sim\mbox{Dirichlet}(\nsample_1^*-\alpha,\ldots,\nsample_k^*-\alpha,\theta+\alpha k)$ and $P^*$ is a Pitman--Yor process of parameter $(\alpha,\theta+\alpha k)$, see Corollary 20 of \citet{Pit:96a}. Thus sampling from $\law(P|\latent)$, hence from $\law(\psi(P)|\latent)$, requires sampling the infinite dimensional $P^*$.  Cf. \citet[Section 4.4]{Ish:Jam:01}. For the sake of comparison, the conjugacy of the Dirichlet process similarly leads to the need of sampling an infinite dimensional process, where $P|X_{1:\nsample}$ takes the form of a Dirichlet process. As already noticed, the truncation of the Dirichlet process is very well understood, both theoretically and practically. The popular \textsf{R} package \textsf{DPpackage} \citep{Jar:07,Jar:etal:11} makes use of the \textit{posterior} truncation point $\tau^*(\epsilon)$, as defined in \eqref{eq:eps-PY}, but here with respect to the posterior distribution of the process. Thus, it satisfies
  $\tau^*(\epsilon)-1\sim \mbox{Pois}((\theta+\nsample)
  \log(1/\epsilon)),$
where $\theta +\nsample$ is the precision of the posterior Dirichlet process. 
Adopting here similar lines for the Pitman--Yor process, we replace $P^*$ by the truncated process $P^*_\epsilon$
\begin{equation*}
  P^*_\epsilon(\ddr x)
  =\sum_{i=1}^{\tau^*(\epsilon)}\p^*_i
  \delta_{\xi_i}(\ddr x)+ R_{\tau^*(\epsilon)}
  \delta_{\xi_0}(\ddr x),
\end{equation*}  
cf. equation \eqref{eq:eps-PY}. Here $(\p_i^*)_{i\geq 1}$ are defined according to \eqref{eq:stick-breaking} with  $\theta+\alpha k$ in place of $\theta$, i.e. 
  $V_j\overset{\mbox{\footnotesize ind}}{\sim}
  \text{beta}(1-\alpha,\theta+\alpha (k+j))$.  
Hence, according to Theorem~\ref{cor:limiting_tau_eps} we have
\begin{equation*}
  \tau^*(\epsilon)-1\sim_{a.s.}
  (\epsilon T_{\alpha,\theta+\alpha k}/\alpha)^{-\alpha/(1-\alpha)},
  \quad\mbox{as }\epsilon\to 0
\end{equation*}
hence {\sc Algorithm 2} can be applied here.


\section{Simulation study}\label{sec:3}


\subsection{Stopping time $\tau(\epsilon)$}\label{sec:3.1}

According to Theorem \ref{cor:limiting_tau_eps}, the asymptotic distribution of $\tau(\epsilon)$ changes with $\epsilon$,  $\alpha$ and $\theta$. 
For illustration, we simulate $\tau(\epsilon)$ from {\tt Steps 1.-2.} in {\sc Algorithm 2} using Devroye's sampler, cf. {\sc Algorithm 3} in Appendix \ref{appendix:devroye}. In Figure \ref{fig:asymp-approx} we compare density plots obtained with $10^4$ iterations with respect to different combinations of $\epsilon$, $\alpha$ and $\theta$.
The plot in the left panel shows how smaller values of $\epsilon$ result in larger values of $\tau(\epsilon)$. In fact, as $\epsilon\to 0$, $\tau(\epsilon)$ increases proportional to $1/\epsilon^{\alpha/(1-\alpha)}$. Note also that $(\epsilon T_{\alpha,\theta}/\alpha)^{-\alpha/(1-\alpha)}$ is nonnegative for $T_{\alpha,\epsilon}<\alpha/\epsilon$, which happens with high probability when $\epsilon$ is small. As for $\alpha$, the plot in the central panel shows how $\tau(\epsilon)$ increases as $\alpha$ gets large. In fact, it is easy to see that $(\epsilon T_{\alpha,\theta}/\alpha)^{-\alpha/(1-\alpha)}$ is increasing in $\alpha$ when $T_{\alpha,\epsilon}<\edr^{1-\alpha}\alpha/\epsilon$, which also happens with high probability when $\epsilon$ is small, so the larger $\alpha$, the more stick-breaking frequencies are needed in order to account for a prescribed approximation error $\epsilon$. Finally, the plot in the right panel shows that the larger $\theta$, the larger $\tau(\epsilon)$. In fact, by definition, the polynomial tilting makes $T_{\alpha,\theta}$ stochastically decreasing in $\theta$.
\begin{figure}[!ht]
\centering
\begin{tabular}{ccc}
\includegraphics[width = .33\textwidth]{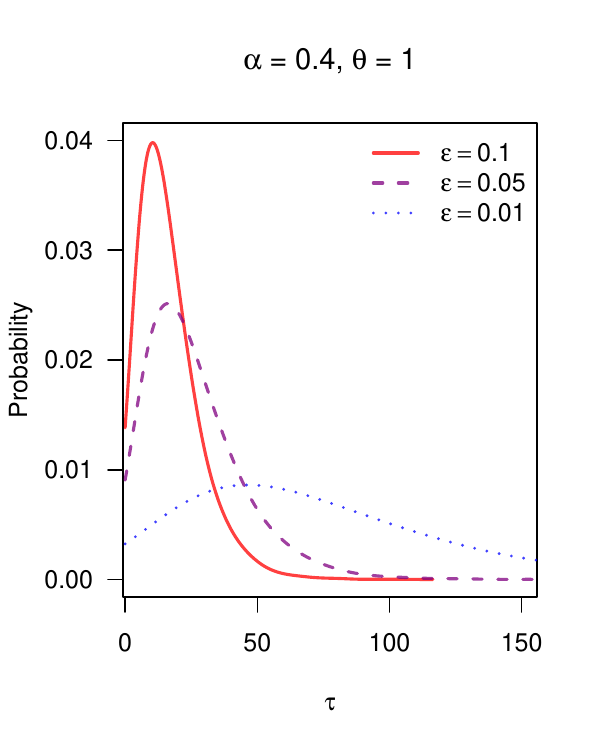}
\includegraphics[width = .33\textwidth]{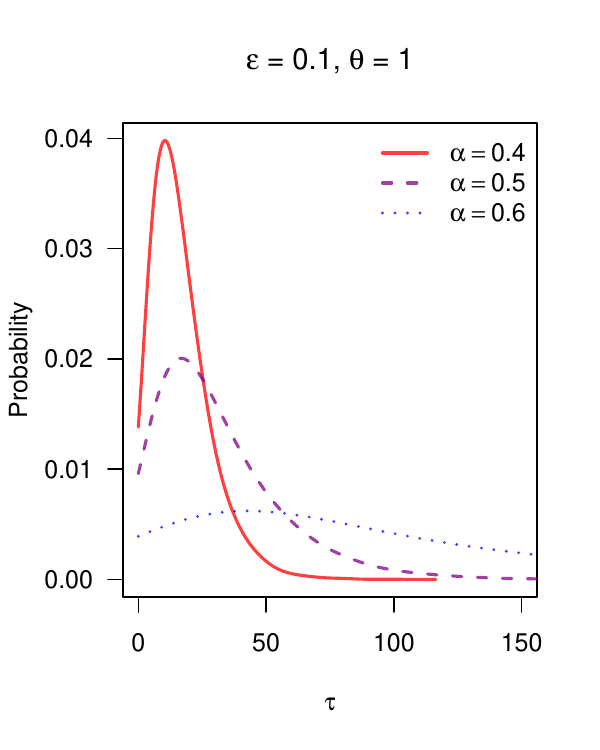}&
\includegraphics[width = .33\textwidth]{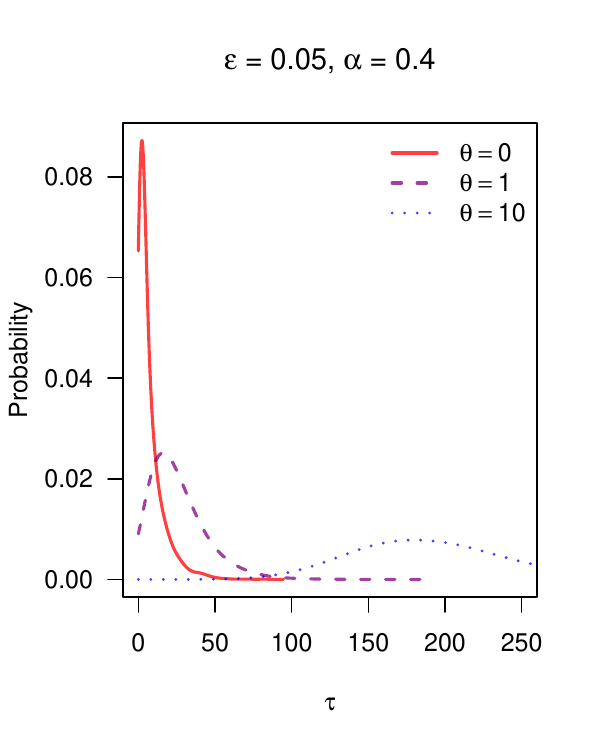}
\end{tabular}
\caption{
Density plot for the asymptotic approximation of $\tau(\epsilon)$ based on $10^4$ values under the following parameter configurations. 
Left: $\epsilon \in\{0.10, 0.05, 0.01\}$, $\alpha = 0.4$, $\theta = 1$.
Center: $\alpha \in\{0.4, 0.5, 0.6\}$, $\theta = 1$, $\epsilon = 0.1$.
Right: $\theta \in\{0,1,10\}$, $\alpha = 0.25$, $\epsilon = 0.05$.
}
\label{fig:asymp-approx}
\end{figure}

In order to illustrate the rate of convergence in Theorem \ref{cor:limiting_tau_eps}, we compare next the exact distribution of $\tau(\epsilon)$ with the asymptotic one. To do so, we repeat the following experiment several times: we simulate $\tau(\epsilon)$ from {\tt Steps 1.-3.} in {\sc Algorithm 1}, then we compare the empirical distribution of  
  $(\epsilon/\alpha)^{\alpha}
  (\tau(\epsilon)-1)^{1-\alpha}$
with
  $T_{\alpha,\theta}^{-\alpha}$,
the latter corresponding to the $\alpha$-diversity of the PY process, see Section \ref{sec:4} for a formal definition. In Table \ref{tab:diversity} we report the Kolmogorov distance together with expected value, median, first and third quartiles for both the exact and the asymptotic distribution obtained with $10^4$ iterations. This is repeated for $\alpha=0.5$, $\theta=\{0,1,10\}$ and $\epsilon=\{0.10,0.05,0.01\}$. As expected, as we decrease $\epsilon$, the Kolmogorov distance gets smaller to somehow different rates according to the parameter choice. The derivation of convergence rates is left for future research. 

\begin{table}[!ht]
\setlength{\tabcolsep}{3.5pt}
\vspace*{4pt}
\begin{center}
\begin{tabular}{cc@{\hskip 15pt}r@{\hskip 15pt}rr@{\hskip 15pt}rr@{\hskip 15pt}rr@{\hskip 15pt}rr}
\hline
  &&{\boldmath${d_K}$}& \multicolumn{2}{c}{\textbf{Mean}} & \multicolumn{2}{c}{\boldmath$25\%$} & 
\multicolumn{2}{c}{\textbf{Median}} & \multicolumn{2}{c}{\boldmath$75\%$} \\ 
 $\theta$ & $\epsilon$ &  & As & Ex & As & Ex & As & Ex & As & Ex \\
\hline
0 & 0.10 & 3.42 & 1.06 & 1.05 & 0.45 & 0.45 & 0.89 & 0.89 & 1.61 & 1.55 \\ 
  0 & 0.05 & 2.17 & 1.10 & 1.08 & 0.45 & 0.45 & 0.95 & 0.95 & 1.64 & 1.58 \\ 
  0 & 0.01 & 1.73 & 1.14 & 1.11 & 0.45 & 0.45 & 0.97 & 0.95 & 1.64 & 1.60 \\ 
  1 & 0.10 & 4.79 & 2.24 & 2.14 & 1.55 & 1.48 & 2.14 & 2.10 & 2.86 & 2.76 \\ 
  1 & 0.05 & 2.38 & 2.25 & 2.20 & 1.55 & 1.52 & 2.17 & 2.14 & 2.86 & 2.79 \\ 
  1 & 0.01 & 1.40 & 2.26 & 2.25 & 1.57 & 1.54 & 2.19 & 2.19 & 2.87 & 2.85 \\ 
  10 & 0.10 & 11.93 & 6.39 & 6.07 & 5.69 & 5.40 & 6.34 & 6.06 & 7.04 & 6.72 \\ 
  10 & 0.05 & 6.12 & 6.39 & 6.24 & 5.70 & 5.56 & 6.34 & 6.22 & 7.05 & 6.88 \\ 
  10 & 0.01 & 1.93 & 6.40 & 6.37 & 5.71 & 5.70 & 6.34 & 6.34 & 7.05 & 7.00 
\\
\hline
\end{tabular}
\end{center}
\caption{Summary statistics for the asymptotic distribution (As)  and exact distribution (Ex) of $\tau(\epsilon)$ at the scale of the $\alpha$-diversity based on $10^4$ values. The Kolmogorov distance ($d_K$) is between the empirical cumulative distribution function of the sample from the exact distribution and the asymptotic one (multiplied by a factor of 100). The parameter values are $\alpha = 0.5$, $\theta\in\{0,1,10\}$ and $\epsilon\in\{0.10,0.05,0.01\}$. 
}\label{tab:diversity}
\end{table}

\subsection{Functionals of the $\epsilon$-PY process}\label{sec:3.2}
In the case that $P$ is defined on $\mathcal{X}\subseteq\mathds{R}$, the total variation bound \eqref{eq:TV} implies that $|F(x)-F_\epsilon(x)|<\epsilon$ almost surely for any $x\in\mathds{R}$, where $F_\epsilon$ and $F$ are the cumulative distribution functions of $P_\epsilon$ and $P$. 
Also, measurable functionals $\psi(P)$ such as the mean $\mu=\int x P(\ddr x)$ can be approximated in distribution by the corresponding functionals $\psi(P_\epsilon)$. 
For illustration, we set $\mathcal{X}=[0,1]$ and $P_0$ the uniform distribution on $[0,1]$. For given $\alpha$ and $\theta$, we then compare the distribution under $P$ with that under the $\epsilon$-PY process $P_\epsilon$ for $F(1/2)$, $F(1/3)$ and $\mu=\int x P(\ddr x)$. 
As for the distribution of the finite dimensional distributions $F(1/2)$ and $F(1/3)$ under the full process $P$, we set $\alpha=0.5$ so to exploit  results in \citet{Jam:etal:10}. According to their Proposition 4.7,  the finite dimensional distributions of $P$ when $\alpha=0.5$ are given by
  $$f(w_1,\ldots,w_{n-1})=
  \frac{(\prod_{i=1}^np_i)\Gamma(\theta+n/2)}
  {\pi^{(n-1)/2}\Gamma(\theta+1/2)}
  \frac{w_1^{-3/2}\cdots w_{n-1}^{-3/2}
  (1-\sum_{i=1}^{n-1}w_i)^{-3/2}}
  {\mathcal{A}_n(w_1,\ldots,w_{n-1})^{\theta+n/2}}$$
for any partition $A_1,\ldots,A_n$ of $\mathcal{X}$ with $p_i=P_0(A_i)$ and 
  $\mathcal{A}_n(w_1,\ldots,w_{n-1})
  =\sum_{i=1}^{n-1}p_i^2 w_i^{-1}
  +p_n^2 (1-\sum_{i=1}^{n-1}w_i)^{-1}$.
Direct calculation shows that $F(1/2)$ has beta distribution with parameters $(\theta+1/2,\theta+1/2)$ while $F(1/3)$ has density
  $$f(w)=\frac{2}{\sqrt\pi}9^\theta
  \frac{\Gamma(\theta+1)}{\Gamma(\theta+1/2)}
  \frac{(w(1-w))^{\theta-1/2}}{(1+3w)^{\theta+1}}.$$
As for the mean functional $\mu=\int xP(\ddr x)$, the distribution under the full process $P$ is approximated by simulations by setting a deterministic truncation point sufficiently large. As for the distribution under $P_\epsilon$, we use both {\sc Algorithm 1} and {\sc Algorithm 2}. 

In Figure \ref{fig:random-probabilities} we compare the density plots of $F(1/2)$ for $\epsilon=\{0,1.0.05,0.001\}$ and $\theta=\{0,10\}$ under $P_\epsilon$ with the beta density under $P$ so to illustrate that the two distributions get close as $\epsilon$ gets small. 
\begin{figure}[!ht]
\centering
\begin{tabular}{c}
\includegraphics[width = \textwidth]{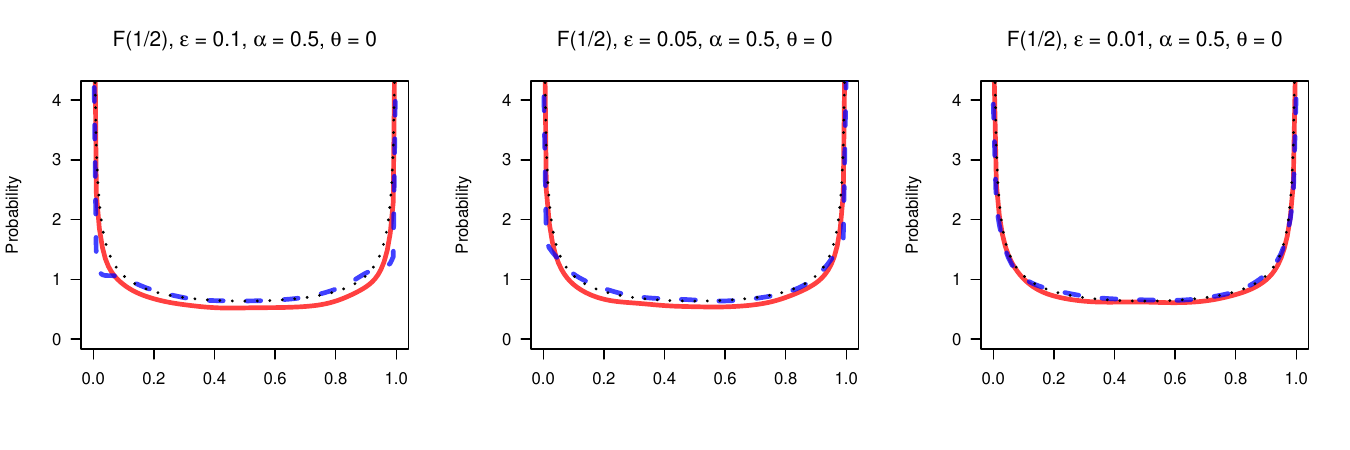}\\
\includegraphics[width = \textwidth]{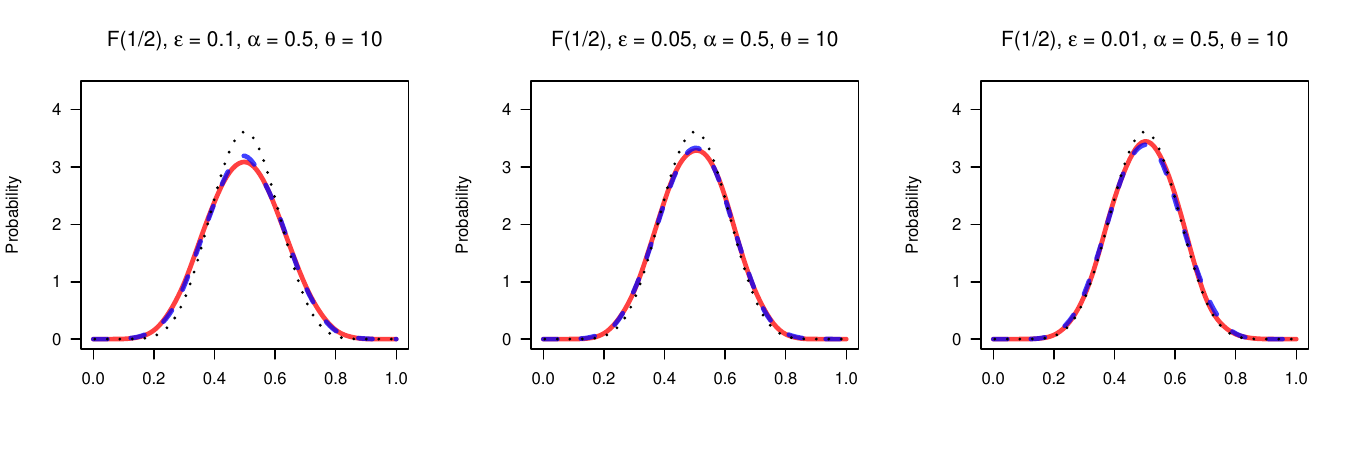}
\end{tabular}
\caption{Density plots for the random probability $F(1/2)$ using the {\sc Algorithm 2} (in red solid curve) and {\sc Algorithm 1} (in blue dashed curve) to sample from the $\epsilon$-PY process. The density under the Pitman--Yor process is the black dotted curve. The parameter $\alpha$ is fixed equal to $0.5$, $\theta$ is equal to $0$ on the first row and $10$ on the second row, while $\epsilon$ is respectively equal to $\{0.10, 0.05, 0.01\}$ in the left, center and right columns.}
\label{fig:random-probabilities}
\end{figure}
As for $F(1/3)$ and $\mu = \int x P(\ddr x)$, in Tables \ref{tab:F13} and \ref{tab:Fmu} we report the Kolmogorov distance between $P$ and $P_\epsilon$ for the two sampling algorithms, together with expected value, median, first and third quartiles. For each case and each parameter configuration, we have sampled $10^4$ trajectories from the $\epsilon$-PY process and $10^4$ trajectories from the Pitman--Yor process in the case of $\mu = \int x P(\ddr x)$. As expected, the Kolmogorov distances are generally larger, still close, when using {\sc Algorithm 2} versus {\sc Algorithm 1} due to the approximate nature of the former.

\begin{table}[!ht]
\def~{\hphantom{0}}
\caption{Simulation study on $F(1/3)$}
\label{tab:F13}
\setlength{\tabcolsep}{2pt}
\vspace*{4pt}
\begin{center}
\begin{tabular}{cc@{\hskip 8pt}rr@{\hskip 8pt}rrr@{\hskip 8pt}rrr@{\hskip 8pt}rrr@{\hskip 8pt}rrr}
\hline
  & & \multicolumn{2}{c}{\boldmath${d_K}$} & \multicolumn{3}{c}{\textbf{Mean}} & \multicolumn{3}{c}{\boldmath$25\%$} & 
\multicolumn{3}{c}{\textbf{Median}} & \multicolumn{3}{c}{\boldmath$75\%$} \\ 
$\theta$ & $\epsilon$ & 
{\sc Al1} & {\sc Al2} & 
{\sc Al1} & {\sc Al2} & {\sc PY} & 
{\sc Al1} & {\sc Al2} & {\sc PY} & 
{\sc Al1} & {\sc Al2} & {\sc PY} & 
{\sc Al1} & {\sc Al2} & {\sc PY} \\
\hline
0 & 0.10 & 16.29 & 16.48 & 0.33 & 0.33 & 0.33 & 0.04 & 0.01 & 0.04 & 0.20 & 0.16 & 0.20 & 0.60 & 0.64 & 0.59 \\ 
0 & 0.05 & 11.53 & 12.52 & 0.33 & 0.33 & 0.33 & 0.05 & 0.01 & 0.04 & 0.20 & 0.17 & 0.20 & 0.58 & 0.63 & 0.59 \\ 
  0 & 0.01 & 5.49 & 5.60 & 0.34 & 0.33 & 0.33 & 0.04 & 0.03 & 0.04 & 0.21 & 0.19 & 0.20 & 0.59 & 0.61 & 0.59 \\ 
  1 & 0.10 & 3.08 & 5.65 & 0.33 & 0.33 & 0.33 & 0.14 & 0.12 & 0.14 & 0.29 & 0.28 & 0.28 & 0.49 & 0.50 & 0.49 \\ 
  1 & 0.05 & 1.34 & 3.11 & 0.33 & 0.33 & 0.33 & 0.14 & 0.13 & 0.14 & 0.28 & 0.28 & 0.28 & 0.48 & 0.50 & 0.49 \\ 
  1 & 0.01 & 0.56 & 0.89 & 0.33 & 0.34 & 0.33 & 0.14 & 0.14 & 0.14 & 0.28 & 0.29 & 0.28 & 0.49 & 0.49 & 0.49 \\ 
  10 & 0.10 & 3.10 & 3.81 & 0.33 & 0.33 & 0.33 & 0.25 & 0.25 & 0.26 & 0.32 & 0.32 & 0.32 & 0.40 & 0.41 & 0.40 \\ 
  10 & 0.05 & 1.41 & 1.38 & 0.33 & 0.33 & 0.33 & 0.26 & 0.26 & 0.26 & 0.32 & 0.32 & 0.32 & 0.40 & 0.40 & 0.40 \\ 
  10 & 0.01 & 0.75 & 0.65 & 0.33 & 0.33 & 0.33 & 0.26 & 0.26 & 0.26 & 0.33 & 0.32 & 0.32 & 0.40 & 0.40 & 0.40 \\ 
\hline
\end{tabular}
\end{center}
\par
\medskip
\caption{Simulation study on $\mu = \int x P(\ddr x)$}
\label{tab:Fmu}
\setlength{\tabcolsep}{2pt}
\begin{center}
\begin{tabular}{cc@{\hskip 8pt}rr@{\hskip 8pt}rrr@{\hskip 8pt}rrr@{\hskip 8pt}rrr@{\hskip 8pt}rrr}
\hline
  & & \multicolumn{2}{c}{\boldmath${d_K}$} & \multicolumn{3}{c}{\textbf{Mean}} & \multicolumn{3}{c}{\boldmath$25\%$} & 
\multicolumn{3}{c}{\textbf{Median}} & \multicolumn{3}{c}{\boldmath$75\%$} \\ 
$\theta$ & $\epsilon$ & 
{\sc Al1} & {\sc Al2} & 
{\sc Al1} & {\sc Al2} & {\sc PY} & 
{\sc Al1} & {\sc Al2} & {\sc PY} & 
{\sc Al1} & {\sc Al2} & {\sc PY} & 
{\sc Al1} & {\sc Al2} & {\sc PY} \\
\hline
0 & 0.10 & \hphantom{1}1.60 & \hphantom{1}3.57 & 0.50 & 0.50 & 0.50 & 0.36 & 0.34 & 0.36 & 0.50 & 0.50 & 0.50 & 0.64 & 0.67 & 0.65 \\ 
0 & 0.05 & 0.94 & 2.72 & 0.50 & 0.50 & 0.50 & 0.35 & 0.34 & 0.36 & 0.50 & 0.50 & 0.50 & 0.64 & 0.66 & 0.65 \\ 
  0 & 0.01 & 1.18 & 2.10 & 0.50 & 0.50 & 0.50 & 0.36 & 0.35 & 0.36 & 0.50 & 0.50 & 0.50 & 0.64 & 0.65 & 0.65 \\ 
  1 & 0.10 & 1.61 & 3.18 & 0.50 & 0.50 & 0.50 & 0.40 & 0.39 & 0.40 & 0.50 & 0.50 & 0.50 & 0.60 & 0.61 & 0.60 \\ 
  1 & 0.05 & 1.28 & 2.32 & 0.50 & 0.50 & 0.50 & 0.40 & 0.40 & 0.40 & 0.50 & 0.50 & 0.50 & 0.59 & 0.60 & 0.60 \\ 
  1 & 0.01 & 1.12 & 0.57 & 0.50 & 0.50 & 0.50 & 0.40 & 0.41 & 0.40 & 0.50 & 0.50 & 0.50 & 0.60 & 0.60 & 0.60 \\ 
  10 & 0.10 & 2.81 & 4.18 & 0.50 & 0.50 & 0.50 & 0.45 & 0.46 & 0.46 & 0.50 & 0.50 & 0.50 & 0.55 & 0.55 & 0.54 \\ 
  10 & 0.05 & 1.78 & 1.28 & 0.50 & 0.50 & 0.50 & 0.46 & 0.46 & 0.46 & 0.50 & 0.50 & 0.50 & 0.54 & 0.54 & 0.54 \\ 
  10 & 0.01 & 2.01 & 1.09 & 0.50 & 0.50 & 0.50 & 0.46 & 0.46 & 0.46 & 0.50 & 0.50 & 0.50 & 0.54 & 0.54 & 0.54 \\ 
\hline
\end{tabular}
\end{center}
\captionsetup{labelformat=empty} 
\caption*{Summary statistics for $F(1/3)$ (Table \ref{tab:F13}) and $\mu = \int x P(\ddr x)$ (Table \ref{tab:Fmu}) using {\sc Algorithm 1} ({\sc Al1}) and {\sc Algorithm 2} ({\sc Al2}) to sample from the $\epsilon$-PY process. The Kolmogorov distance ($d_K$) is  between the cumulative distribution functions with respect to the Pitman--Yor (PY) process (multiplied by a factor of 100). The parameter values are $\alpha = 0.5$, $\theta\in\{0,1,10\}$ and $\epsilon\in\{0.10,0.05,0.01\}$.}
\end{table}


\subsection{Computation time}\label{sec:3.3}

In this section, we provide a concrete justification of the computational advantage of using {\sc Algorithm 2} versus {\sc Algorithm 1}.
We simulate $10^4$ $\epsilon$-PY iterations by using {\sc Algorithm 1} and  {\sc Algorithm 2}  for different combinations of the  $\alpha$ and $\theta$ parameters and of the $\epsilon$ error threshold. In Table~\ref{tab:computingtime} (resp. Table~\ref{tab:computingtime_ratio}), we report the average computing time\footnote{The experiments were conducted on an Intel Core i5 processor (3.1 GHz) computer.} \textit{per iteration} (resp. \textit{per support point}) for \textsc{Algorithm 1} and \textsc{Algorithm 2}. By \textit{iteration}, we mean a full realization of the $\epsilon$-PY process including frequencies and locations, while by \textit{support point}, we mean that we divide the total time by the number of support points $\tau(\epsilon)+1$. In order to account for the computational task required per iteration,  the expected stopping time $\E[\tau(\epsilon)]$ is also reported. Both tables illustrate that our proposed approach is faster than  \textsc{Algorithm 1} when the $\epsilon$-PY is composed of about 20 support points or more. The more support points, the faster   \textsc{Algorithm 2} is compared to   \textsc{Algorithm 1}. This disadvantage of the former for small numbers of support points comes from the fixed cost of initially generating a random variable with the same distribution as $T_{\alpha,\theta}$. Conversely, as the number of support points increases, this fixed cost is largely counterbalanced by the fast vector-sampling of a prescribed size, which is in contrast with  \textsc{Algorithm 1}  \texttt{while} loop whose cost increases with the number of support points. This can be seen in Table~\ref{tab:computingtime_ratio} where the actual sampling time per support point is essentially increasing for  \textsc{Algorithm 1} and decreasing for  \textsc{Algorithm 2}. With the parameter configurations tested, \textsc{Algorithm 2}   can be up to 90 times faster \textsc{Algorithm 1} for $\alpha=0.6$, $\theta=10$ and $\epsilon=0.01$.

\begin{table}[!ht]
\def~{\hphantom{0}}
\caption{Computing time (\textit{m}s) per \textbf{iteration}}
\label{tab:computingtime}
\setlength{\tabcolsep}{2pt}
\vspace*{4pt}
\begin{center}
\begin{tabular}{rrrrrrrrrrr}
\cline{3-11}
&&
\multicolumn{3}{c}{\boldmath$\alpha = 0.4$} & 
\multicolumn{3}{c}{\boldmath$\alpha = 0.5$} & 
\multicolumn{3}{c}{\boldmath$\alpha = 0.6$}\\
  $\theta$ & $\epsilon$ & 
  \textsc{Al1} & \textsc{Al2} & $n$ & 
  \textsc{Al1} & \textsc{Al2} & $n$ & 
  \textsc{Al1} & \textsc{Al2} & $n$ \\
 \hline
0 & 0.10 & 0.01 & 0.20 & 5 & 0.02 & 0.04 & 11 & 0.11 & 0.05 & 38 \\ 
  0 & 0.05 & 0.01 & 0.04 & 8 & 0.04 & 0.04 & 21 & 0.20 & 0.06 & 105 \\ 
  0 & 0.01 & 0.04 & 0.04 & 20 & 0.36 & 0.05 & 101 & 15.10 & 0.26 & 1163 \\ 
  1 & 0.10 & 0.03 & 0.19 & 17 & 0.07 & 0.05 & 31 & 0.23 & 0.07 & 92 \\ 
  1 & 0.05 & 0.06 & 0.06 & 26 & 0.13 & 0.06 & 61 & 0.80 & 0.12 & 258 \\ 
  1 & 0.01 & 0.18 & 0.09 & 73 & 0.80 & 0.12 & 301 & 27.75 & 0.57 & 2877 \\ 
  10 & 0.10 & 0.22 & 0.15 & 121 & 0.61 & 0.10 & 211 & 2.11 & 0.18 & 567 \\ 
  10 & 0.05 & 0.45 & 0.10 & 191 & 1.52 & 0.15 & 421 & 9.22 & 0.37 & 1603 \\ 
  10 & 0.01 & 1.93 & 0.20 & 558 & 13.24 & 0.48 & 2101 & 760.68 & 4.01 & 17911 \\ 
     \hline
\end{tabular}
\end{center}
\par
\medskip
\caption{Computing time ($\mu$s) per \textbf{support point}}
\label{tab:computingtime_ratio}
\setlength{\tabcolsep}{2pt}
\begin{center}
\begin{tabular}{rrrrrrrrrrr}
\cline{3-11}
&&
\multicolumn{3}{c}{\boldmath$\alpha = 0.4$} & 
\multicolumn{3}{c}{\boldmath$\alpha = 0.5$} & 
\multicolumn{3}{c}{\boldmath$\alpha = 0.6$}\\
  $\theta$ & $\epsilon$ & 
  \textsc{Al1} & \textsc{Al2} & $n$ & 
  \textsc{Al1} & \textsc{Al2} & $n$ & 
  \textsc{Al1} & \textsc{Al2} & $n$ \\
 \hline
0 & 0.10 & 1.92 & 38.46 & 5 & 1.82 & 3.64 & 11 & 2.91 & 1.32 & 38 \\ 
  0 & 0.05 & 1.30 & 5.22 & 8 & 1.90 & 1.90 & 21 & 1.91 & 0.57 & 105 \\ 
  0 & 0.01 & 1.95 & 1.95 & 20 & 3.56 & 0.50 & 101 & 12.98 & 0.22 & 1163 \\ 
  1 & 0.10 & 1.81 & 11.45 & 17 & 2.26 & 1.61 & 31 & 2.50 & 0.76 & 92 \\ 
  1 & 0.05 & 2.33 & 2.33 & 26 & 2.13 & 0.98 & 61 & 3.10 & 0.46 & 258 \\ 
  1 & 0.01 & 2.45 & 1.23 & 73 & 2.66 & 0.40 & 301 & 9.65 & 0.20 & 2877 \\ 
  10 & 0.10 & 1.82 & 1.24 & 121 & 2.89 & 0.47 & 211 & 3.72 & 0.32 & 567 \\ 
  10 & 0.05 & 2.35 & 0.52 & 191 & 3.61 & 0.36 & 421 & 5.75 & 0.23 & 1603 \\ 
  10 & 0.01 & 3.46 & 0.36 & 558 & 6.30 & 0.23 & 2101 & 42.47 & 0.22 & 17911 \\ 
     \hline
\end{tabular}
\end{center}
\captionsetup{labelformat=empty} 
\caption*{Average computing time per {iteration} (in \textit{milli}second in Table~\ref{tab:computingtime}) and per {support point} (in \textit{micro}second in Table~\ref{tab:computingtime_ratio}) for \textsc{Algorithm 1} (\textsc{Al1}) and \textsc{Algorithm 2} (\textsc{Al2})  based on $10^4$ iterations, and expected stopping time $n=\E[\tau(\epsilon)]$. The parameter values are $\alpha \in\{0.4, 0.5,0.6\}$, $\theta\in\{0,1,10\}$ and $\epsilon\in\{0.10,0.05,0.01\}$.} 
\end{table}


\section{Connections with random partition structures}\label{sec:4}


\subsection{$\alpha$-diversity and asymptotic distribution of $R_n$}\label{sec:4.1}
The random variable $T_{\alpha,\theta}$ in Theorem \ref{th:1} plays a key role in the Pitman--Yor process, in particular for its link with the {\it $\alpha$-diversity} of the process. The $\alpha$-diversity is defined as the almost sure limit of $n^{-\alpha}K_n$ where $K_n$ denotes the (random) number of 
unique values in the first $n$ terms of an exchangeable sequence from $P$ in \eqref{eq:discrete}. According to Theorem 3.8 in \citet{Pit:06}, $n^{-\alpha}K_n\sim_{a.s.}(T_{\alpha,\theta})^{-\alpha}$, in particular, for $\theta=0$, $T_\alpha^{-\alpha}$ has a Mittag-Leffler distribution with $p$-th moment $\Gamma(p+1)/\Gamma(p\alpha+1)$, $p>-1$. According to \citet[Lemma 3.11, eqn (3.36)]{Pit:06}, the asymptotic distribution of the truncation error $R_n$ can be derived from that of $K_n$ to get
  $R_n\sim_{a.s.}
  \alpha (T_{\alpha,\theta})^{-1}
  \,n^{1-1/\alpha}$
as $n\to\infty$.
The proof relies on Kingman's representation of random partitions \citep{Kin:78} together with techniques set forth by \citet{Gne:Han:Pit:07}. In the proof of Theorem \ref{th:1} the asymptotic distribution of $T_n=-\log R_n$ is a direct consequence of the above by an application of the continuous mapping theorem.

When $\theta=0$ it is possible to give an interpretation of the asymptotic distribution of $R_n$ in terms of the jumps of a stable subordinator. In this case the weights of $P$ can be represented as the renormalized jumps of a stable subordinator, with $T_\alpha$ denoting  the total mass. Denote the (unormalized) jumps as $(J_i)_{i\geq 1}$ in decreasing order and as $(\tilde J_i)_{i\geq 1}$ when in size-biased order, 
  $$T_\alpha=\sum_{i\geq 1} J_i
  =\sum_{i\geq 1} \tilde J_i,\quad \text{and }
  T_\alpha R_n=\sum_{i>n} \tilde J_i.$$
By the asymptotic distribution of $R_n$,  
  $n^{1/\alpha-1}
  \sum_{i>n} \tilde J_i\to_{a.s.} \alpha$ 
as $n\to\infty$.
That is, once properly scaled, the small jumps of the stable subordinator (in size-biased random order), interpreted as the \comillas{dust}, converge to the \comillas{proportion} $\alpha$. This is reminiscent to the number of singletons which is asymptotically $(n\to\infty)$ a $\alpha$ proportion of the number of groups in a sample of size $n$, see Lemma 3.11, eqn (3.39), of \citet{Pit:06}.


\subsection{Regenerative random compositions and Anscombe's theorem}\label{sec:4.2}
We review next the connections of the counting renewal process $N(t)$ defined in \eqref{eq:T_n}--\eqref{eq:N(t)} and the theory of regenerative random compositions. The reader is referred to the survey of \citet{Gne:10} for a review. Recall that, when $\alpha=0$ (Dirichlet process case), $V_i\overset{\mbox{\footnotesize \gls{iid}}}{\sim}\text{beta}(1,\theta)$ in the stick-breaking representation \eqref{eq:stick-breaking}, and in turns $Y_i=-\log(1-V_i)\overset{\mbox{\footnotesize \gls{iid}}}{\sim}\mbox{Exp}(\theta)$ and $T_n=-\log R_n\sim\mbox{Gamma}(n,\theta)$. By direct calculus, $N(t)\sim\mbox{Pois}(\theta t)$ so that $\tau(\epsilon)-1=N(\log 1/\epsilon)$ has $\mbox{Pois}(\theta\log 1/\epsilon)$ distribution. 
More generally, the stick-breaking frequencies $(\p_i)_{i\geq 1}$ correspond to the gaps in $[0,1]$ identified by the {\it multiplicative regenerative} set ${\cal R}\subset (0,1)$ consisting of the random partial sums $1-R_k=\sum_{i\leq k}p_i$. The complement open set ${\cal R}^c=(0,1)/\cal R$ can be represented as a disjoint union of countably many open intervals or gaps,
  ${\cal R}^c=\bigcup_{k=0}^\infty(1-R_{k},1-R_{k+1})$, $R_0=1$.
A random composition of the integer $n$ into an ordered sequence $\varkappa_n=(n_1,n_2,\ldots,n_k)$ of positive integers with $\sum_{j}n_j=n$ can be generated as follows: independently of ${\cal R}$, sample $U_1,U_2,\ldots$ from the uniform distribution on $[0,1]$ and group them in clusters by the rule: $U_i,U_j$ belong to the same cluster if they hit the same interval. The random composition of $\varkappa_n$ corresponds then to the record of positive counts in the left-to-right order of the intervals. The composition structure $(\varkappa_n)$ is called regenerative since for all $n>m\geq 1$, conditionally given the first part of $\varkappa_n$ is $m$, if the part is deleted then the remaining composition of $n-m$ is distributed like $\varkappa_{n-m}$. The regenerative set ${\cal R}$ corresponds to the closed range of the multiplicative subordinator $\{1-\exp(-S_t),t\geq0\}$, where $S_t$ is the compound Poisson process with L\'evy intensity $\tilde\nu(\ddr y)=\theta\edr^{-\theta y}\ddr y$. 
Since the range of $S_t$ is a homogeneous Poisson point process on $\R_+$ with rate $\theta$, ${\cal R}$ is an inhomogeneous Poisson point process ${\cal N}(\ddr x)$ on $[0,1]$ with L\'evy intensity
  $\nu(\ddr x)=\theta/(1-x)\ddr x$
so that, for $t=\log 1/\epsilon$,
  $$N(\log 1/\epsilon)={\cal N} [0,1-\epsilon]
  \sim\mbox{Pois}(\lambda),\quad
  \lambda=\int_0^{1-\epsilon}
  \frac{\theta}{1-x}\ddr x=\theta\log 1/\epsilon$$
as expected. Suppose now that $(V_i)_{i\geq 1}$ are independent copies of some random variable $V$ on $[0,1]$, not necessarily $\text{beta}(1,\theta)$ distributed. The corresponding random composition structure has been studied in \citet{Gne:04,Gne:etal:09} as the outcome of a {\it Bernoulli sieve} procedure. We recall here the relevant asymptotic analysis. Let $\mu=\E(-\log(1-V))$ and $\sigma^2=\Var(-\log(1-V))$, equal respectively to $1/\theta$ and $1/\theta^2$ in the DP case, respectively. If those moments are finite, by the \gls{CLT},
  $$\frac{T_n-n\mu}{\sqrt n\sigma}\to_d Z,\quad
  \mbox{as }n\to\infty,$$
where $Z\sim\text{N}(0,1)$, and, by means of Anscobe's Theorem, one obtains that
  $$\frac{N(t)-t/\mu}{\sqrt{\sigma^2t/\mu^3}}
  \rightarrow_d Z,\quad \mbox{as }t\to\infty.$$
It turns out that the normal limit of $N(\log n)$ corresponds to the normal limit of $K_n$,
  $$\frac{K_n-\log n/\mu}{\sqrt{\sigma^2\log n/\mu^{3}}} 
  \rightarrow_d Z,\quad \mbox{as }n\to\infty$$
provided that $\E(-\log V)<\infty$. To see why, consider 
iid random variables $X_1,X_2,\ldots$ with values in $\N$ such that $\{X_i=k\}=\{U_i\in(1-R_{k-1},1-R_{k})\}$. Hence $\Prob(X_1=k|{\cal R})=\p_k$. We then have that $K_n=\#\{k: X_i=k\mbox{ for at least one $i$ among }1,\ldots,n\}$. Define $M_n=\max\{X_1,\ldots,X_n\}$.
For $U_{1,n}\leq U_{2,n}\leq\ldots \leq U_{n,n}$ denoting the order statistics corresponding to the uniform variates $U_1,\ldots,U_n$, we have
  $M_n
  =\min \{j:\ 1-R_j\geq U_{n,n}\}
  =\min \{j:\ T_j\geq E_{n,n}\}$
upon transformation $x\to -\log(1-x)$, where $E_{n,n}$ is the maximum of an iid sample of size $n$ from the standard exponential distribution. Since 
  $N(t)=\max \{n:\ T_n\leq t\}
  =\min \{n:\ T_n\geq t\}-1$ 
we have 
  $M_n-1=N(E_{n,n})$. 
\citet{Gne:etal:09} proves the equivalence
  $$\frac{M_n-b_n}{a_n}\to_d X
  \quad\Longleftrightarrow
  \frac{N(\log n)-b_n}{a_n}\to_d X$$
where $X$ is a random variable with a proper and non degenerate distribution with $a_n>0$, $a_n\to\infty$ and $b_n\in\R$. A key fact exploited in the proof is that, from extreme-value theory, $E_{n,n}-\log n$ has an asymptotic distribution of Gumbel type. That $M_n$ can be replaced by $K_n$ in the equivalence relation above follows from the fact that $M_n-K_n$, the number of integers $k<M_n$ not appearing in the sample $X_1,\ldots,X_n$, is bounded in probability when $\E(-\log V)<\infty$, see Proposition 5.1 in \citet{Gne:etal:09}. 

Back to the Pitman--Yor process case, by Theorem \ref{th:1} we have
  $n^{-\alpha/(1-\alpha)}
  N(\log n)\to_{d}
  (T_{\alpha,\theta}/\alpha)^{-\alpha/(1-\alpha)}$
while
  $n^{-\alpha} K_n
  \to_{a.s.}
  (T_{\alpha,\theta})^{-\alpha}$.
So we see that $N(\log n)$ and $K_n$ do not have the same asymptotic behavior as in the $\alpha=0$ case. 
By using the fact that
  $$\Prob(X_1>n|(\p_i))=R_n,\quad
  R_n\sim_{a.s.}\alpha n^{-(1-\alpha)/\alpha}
  T_{\alpha,\theta}^{-1},$$
and the fact that, conditional on $(\p_i)_{i\geq 1}$, $M_n$ belongs to the domain of attraction of Fr\'echet distribution, \citet[Theorem 6.1]{Pit:Yak:17} establishes that
  $$\Prob(M_n\leq x n^{\alpha/(1-\alpha)})
  \to
  \E\big[\exp\big(-\alpha 
  T_{\alpha,\theta}^{-1}
  x^{-(1-\alpha)/\alpha}\big)\big]$$
so we see that $N(\log n)$ and $M_n$ do not have the same asymptotic behavior as in the $\alpha=0$ case, although they share the same growth rate $n^{\alpha/(1-\alpha)}$. Finally, the non correspondence of the asymptotic distribution of $M_n$ and $K_n$ suggests that the behavior of $M_n-K_n$ is radically different with respect to the $\alpha=0$ case.

\section{Discussion}\label{sec:5}

In this paper we have studied stochastic approximations of the Pitman--Yor process consisting in the truncation of the sequence of stick-breaking frequencies at a random stopping time $\tau(\epsilon)$ that controls the accuracy of the approximation in the total variation distance by $\epsilon$. We name this finite dimensional approximation the $\epsilon$-Pitman--Yor process. We have derived the asymptotic distribution of $\tau(\epsilon)$ as $\epsilon$ goes to zero and we have advanced its use to devise a sampling scheme that generates the stopping time first, and then the frequencies up to that point. The simulations in Section \ref{sec:3} show that the proposed sampler proves computationally very efficient in the moderate to large stopping time regime (for approximately $\tau(\epsilon)\geq 20$).
The asymptotic distribution illustrates how large the stopping time is as the approximation error gets small in terms of the prior parameters $\theta$ and $\alpha$. In particular, it shows that the distribution of $\tau(\epsilon)$ in the Dirichlet process case is not recovered in the limit $\alpha\to 0$ in Theorem \ref{cor:limiting_tau_eps}. In fact, in the Dirichlet process case $\tau(\epsilon)$ grows at a logarithmic rate in $1/\epsilon$ while in Pitman--Yor case it grows at the polynomial rate $\epsilon^{\alpha/(1-\alpha)}$ and the first regime is not recovered by letting $\alpha$ approach $0$ in the second regime. We have also drawn important connections with the theory of random partition structures developed by Jim Pitman and coauthors which highlight the relationship of the the stopping time $\tau(\epsilon)$ with the number $K_n$ of unique values in a sample of size $n$ from the Pitman--Yor process. 

We have left as open problem for future research the study of the conditional distribution of the stick-breaking frequencies given the stopping time. In the Dirichlet process case one can exploit the renewal process interpretation to generate exactly from this conditional distribution. In fact, when $\alpha=0$, the sequence $(-\log R_n)_{n\geq 1}$ corresponds to the jump times of a Poisson process and the conditional distribution of the jumps given the number of jumps at time $t$ can be described in terms of the ordered statistics of i.i.d. uniform random variates on $(0,t)$. The case $\alpha>0$ does not seem to be easily tractable, as it would be if the counting process associated to $\tau(\epsilon)$ were a mixed sample process or, equivalently, a Cox process, cf. \citet[Section 6.3]{Gra:97}.

It would be also interesting to compare the accuracy of our finite dimensional approximation of the Pitman--Yor process to the one proposed in \cite{All:Zar:14}. The latter is based on a representation of the frequencies in decreasing order, cf. \citet[Proposition 22]{Pit:Yor:97}. \cite{All:Zar:14} compare the accuracy of their approximation scheme to a stick-breaking truncation at a number $n$ of stick-breaking frequencies that matches the number of frequencies used in their scheme. Not surprisingly, their approximation is superior since it generates weights in decreasing order, specially when $\alpha$ is large. In contrast, Theorem \ref{cor:limiting_tau_eps} describes precisely how large the truncation threshold $n$ should be as $\alpha$ gets large for a given approximation level $\epsilon$, cf. the center panel of Figure \ref{fig:asymp-approx}. It also underlines that the approximation deteriorates for fixed $n$ and increasing $\alpha$,  which is coherent with the findings in \cite{All:Zar:14}. A fair comparison with their approach can only be done for a given nominal approximation error, but unfortunately the authors did not provide a precise assessment of it. 
The number of stick-breaking frequencies needed to match the approximation accuracy of \cite{All:Zar:14} would be 
{\it de facto} larger due to the non monotonicity. However, since the stopping rule \eqref{eq:tau_eps} adapts to the size of $\alpha$, we do not expect the accuracy of our approximation scheme to deteriorate for $\alpha$ large. 
As for computation time, the techniques used by \cite{All:Zar:14} in order to obtain decreasing frequencies are computational heavy. Their average computing time for $\alpha=0.5$ is about $2.30$ seconds/iteration with $10^4$ locations. This amounts to $0.23$ \textit{milli}seconds/support point, which is $1000$ times slower than the computing time for our Algorithm 2 in the parameter configuration $\alpha=0.5$, $\theta=10$ and $\epsilon=0.01$, equal to $0.23$ \textit{micro}second/support point. It would be interesting to investigate what are the consequences in terms of computation time per iteration for a given approximation error.

\appendix
\section{Random generation of $T_{\alpha,\theta}$}\label{appendix:devroye}

Let $Y$ be a standard $\mbox{Exp}(1)$ random variable. Note that
\begin{align*}
  \Prob((Y/T_\alpha)^\alpha>x)
  &=\int_0^\infty \Prob(Y>x^{1/\alpha} t)f_\alpha(t)\ddr t
  =\int_0^\infty \exp[-x^{1/\alpha}t]f_\alpha(t)\ddr t\\
  &=\E\big[\edr^{-x^{1/\alpha}T_\alpha}\big]
  =\edr^{-x}=\Prob(Y>x)
\end{align*}
so we have
  $Y=_d(Y/T_\alpha)^\alpha$.
For $r<\alpha$, $\E(Y^{-r/\alpha})<\infty$, so we find that
  $\E(Y^{-r/\alpha})=\E( T_\alpha^r)\E(Y^{-r})$
and
\begin{equation}\label{eq:A1}
  \E( T_\alpha^r)=\frac{\E(Y^{-r/\alpha})}{\E(Y^{-r})}
  =\frac{\Gamma(1-r/\alpha)}{\Gamma(1-r)}.
\end{equation}  
The normalizing constant in $f_{\alpha,\theta}(t)$ is
  $\int_0^\infty t^{-\theta}f_\alpha(t)\ddr t
  =\E(T_\alpha^{-\theta})$,
so set $r=-\theta$ and note that $-\theta<\alpha$. 
Let $G_a$ be a gamma random variable with shape $a>0$ and unit rate. Simple moment comparisons using \eqref{eq:A1} yield the distributional equality
  $G_{1+\theta/\alpha}\overset{d}{=}
  (G_{1+\theta}/T_{\alpha,\theta})^\alpha$,
which, however, does not provide a way to generate from $T_{\alpha,\theta}$. For this we resort to \citet{Dev:09}. First we recall how to generate a Zolotarev random variable $Z_{\alpha,b}$ for $\alpha\in(0,1)$ and $b=\theta/\alpha>-1$. Let
  $$C=\frac{\Gamma(1+b\alpha)\Gamma(1+b(1-\alpha))}
  {\pi\Gamma(1+b)}$$
and 
  $$B(u)=A(u)^{-(1-\alpha)}
  =\frac{\sin(u)}
  {\sin(\alpha u)^\alpha\,
  \sin((1-\alpha)u)^{1-\alpha}}.$$
A simple asymptotic argument yields the value
  $B(0)=\alpha^{-\alpha}(1-\alpha)^{-(1-\alpha)}$.
Then
  $f(x)=C\, B(x)^{b},\quad 0\leq x\leq \pi$.
The following bound holds
  $$f(x)\leq 
  C B(0)^b\edr^{-\frac{x^2}{2\sigma^2}},\quad
  \text{with }\sigma^2=\frac{1}{b\alpha(1-\alpha)}.$$
This Gaussian upper bound suggests a simple rejection sampler for sampling Zolotarev random variates. Following \citet{Dev:09}, it is most efficient to adapt the sampler to the value of $\sigma$. If $\sigma\geq\sqrt{2\pi}$, rejection from a uniform random variate is best. Otherwise, use a normal dominating curve as suggested in the bound above. The details are given below.

\begin{quote}
\centerline{{\sc Algorithm 3} (Sampler of $T_{\alpha,\theta}$)}
\vspace{4pt}

{\tt
1. set $b=\theta/\alpha$ and $\sigma=\sqrt{b\alpha(1-\alpha)}$
\vspace{4pt}

2. if $\sigma\geq\sqrt{2\pi}$:
\\
\phantom{1. }then repeat:$\quad$ generate $U\sim\text{\rm Unif}(0,\pi)$ and $V\sim\text{\rm Unif}(0,1)$. 
\\
\phantom{1. then repeat:$\quad$} set $X\leftarrow U$, $W\leftarrow B(X)$.
\\
\phantom{1. then} until $V\leq (W/B(0))^b$
\\
\phantom{1. }else repeat:$\quad$ generate $N\sim\text{\rm N}(0,1)$ and $V\sim\text{\rm Unif}(,1)$. 
\\
\phantom{1. else repeat:$\quad$} set $X\leftarrow \sigma|N|$, $W\leftarrow B(X)$.
\\
\phantom{1. else} until $X\leq \pi$ and $V\edr^{-N^2/2}\leq (W/B(0))^b$
\vspace{4pt}

3. generate $G\overset{d}{=}
G_{1+b(1-\alpha)/\alpha}$
\vspace{4pt}

4. set $T\leftarrow 1/(WG^{1-\alpha})^{1/\alpha}$
\vspace{4pt}

5. return $T$
}
\end{quote}

\bibliographystyle{apalike}
\bibliography{biblio}

\end{document}